\newtheorem{remark}{Remark}[section]
\newtheorem{theorem}[remark]{Theorem}
\newtheorem{proposition}[remark]{Proposition}
\newtheorem{problem}[remark]{Problem}
\title{A note on the $k$-tuple domination number of graphs}
\author{Abel Cabrera Mart\'inez\\
{\small Universitat Rovira i Virgili, Departament d'Enginyeria Inform\`atica i Matem\`atiques } \\  
{\small Av. Pa\"{\i}sos
Catalans 26, 43007 Tarragona, Spain.} \\
{\small
  abel.cabrera\@@urv.cat}
}
\begin{document}
\maketitle

\begin{abstract}
In a graph $G$, a vertex dominates itself and its neighbours. A set $D\subseteq V(G)$ is said to be a $k$-tuple dominating set of $G$  if $D$ dominates every vertex of $G$ at least $k$ times.  The minimum cardinality among all $k$-tuple dominating sets  is the $k$-tuple domination number of $G$. In this note, we provide new bounds on this parameter. Some of these bounds generalize other ones that have been given for the case $k=2$.
\end{abstract}

\noindent {\it Keywords}:
$k$-domination; $k$-tuple domination.

\section{Introduction}  
Throughout this note we consider simple graphs $G$ with vertex set $V(G)$. Given a vertex $v\in V(G)$, $N(v)$ denotes the \emph{open neighbourhood} of $v$ in $G$. In addition, for any set $D\subseteq V(G)$, the \emph{degree} of $v$ in $D$, denoted by $\deg_D(v)$, is the number of vertices in $D$ adjacent to $v$, i.e., $\deg_D(v)=|N(v)\cap D|$. The \emph{minimum} and \emph{maximum degrees} of $G$ will be denoted by $\delta(G)$ and $\Delta(G)$, respectively. Other definitions not given here can be found in standard graph theory books such as \cite{West1996}. 

Domination theory in graphs have been extensively studied in the literature. For instance, see the books \cite{Book-TopicsDom-2020,hhs1,hhs2}. 
A set $D\subseteq V(G)$ is said to be a \emph{dominating set} of $G$ if $\deg_D(v)\geq 1$ for every $v\in V(G)\setminus D$. The \emph{domination number} of $G$ is the minimum cardinality among all dominating sets of $G$ and it is denoted by $\gamma(G)$. We define a  $\gamma(G)$-set as a dominating set of cardinality $\gamma(G)$. The same agreement will be assumed for optimal parameters associated to other characteristic sets defined in the paper.

In 1985, Fink and Jacobson \cite{MR812671,MR812672}  extended  the idea of domination in graphs to the more general notion of $k$-domination.
A set $D\subseteq V(G)$ is said to be a $k$-\emph{dominating set} of $G$ if $\deg_D(v)\geq k$ for every  $v\in V(G)\setminus D$. The $k$-\emph{domination number} of $G$, denoted by $\gamma_k(G)$, is the minimum cardinality among all $k$-dominating sets of $G$. 
Subsequently, and as expected, several variants for $k$-domination were introduced and studied by the scientific community. In two different papers published in 1996 and 2000, Harary and Haynes \cite{MR1401362,Harary2000} introduced the concept of double domination and, more generally, the concept of $k$-tuple domination.
Given a graph $G$ and a positive integer $k\leq \delta(G)+1$, a $k$-dominating set $D$ is said to be a $k$-\emph{tuple dominating set} of $G$ if $\deg_D(v)\geq k-1$ for every $v\in D$. 
The $k$-\emph{tuple domination number} of $G$, denoted by  $\gamma_{\times k}(G)$, is the minimum cardinality among all $k$-tuple dominating sets of $G$. The case $k=2$ corresponds to double domination, in such a case, $\gamma_{\times 2}(G)$ denotes the double domination number of graph $G$.

In this note, we provide new bounds on the $k$-tuple domination number. Some of these bounds generalize other ones that have been given for the double domination number.

\section{New bounds on the $k$-tuple domination number}

Recently, Hansberg and Volkmann \cite{HansVolk2020}  put into context all relevant research results on multiple domination that have been found up to 2020. In that chapter, they posed the following open problem.

\begin{problem}\label{prob-1}(Problem $5.8$, p.$194$, {\rm \cite{HansVolk2020})}
 Give an upper bound for $\gamma_{\times k}(G)$ in terms of $\gamma_k(G)$ for any graph $G$ of minimum degree $\delta(G)\geq k-1$.
\end{problem}

A fairly simple solution for the problem above is given by the straightforward relationship $\gamma_{\times k}(G)\leq k\gamma_{k}(G)$, which can be derived directly by constructing a set of vertices $D'\subseteq V(G)$ of minimum cardinality from a $\gamma_k(G)$-set $D$ such that $D\subseteq D'$ and $\deg_{D'}(x)\geq k-1$ for every vertex $x \in D$. From this construction above, it is easy to check that $D'$ is a $k$-tuple dominating set of $G$ and so,
$$\gamma_{\times k}(G)\leq |D'|=|D|+|D'\setminus D|\leq |D|+(k-1)|D|= k\gamma_{k}(G).$$
This previous inequality was surely considered by Hansberg and Volkmann and, in that sense, they have established the previous problem assuming that $\gamma_{\times k}(G)<k\gamma_{k}(G)$ for every graph $G$ with $\delta(G)\geq k-1$.

We next confirm their suspicions and provide a solution to Problem \ref{prob-1}.

\begin{theorem}\label{teo-open-problem}
Let $k\geq 2$ be an integer. For any graph $G$ with $\delta(G)\geq k-1$,
$$\gamma_{\times k}(G)\leq k\gamma_{k}(G)-(k-1)^2.$$
\end{theorem}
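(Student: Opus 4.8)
The plan is to improve on the naïve construction $\gamma_{\times k}(G)\le k\gamma_k(G)$ by being smarter about how many vertices we must add to a $\gamma_k(G)$-set $D$ to make every vertex of $D$ have at least $k-1$ neighbours inside the set. The naïve bound charges $k-1$ new vertices per vertex of $D$, but this wastes a lot: a newly added vertex can help several vertices of $D$ at once, and, more importantly, vertices of $D$ already have neighbours — possibly in $D$, possibly not. So I would start from a $\gamma_k(G)$-set $D$, let $D'\supseteq D$ be a minimum superset with $\deg_{D'}(x)\ge k-1$ for all $x\in D$, write $R=D'\setminus D$, and try to bound $|R|$ by something like $(k-1)|D| - (k-1)^2 = (k-1)(\gamma_k(G)-(k-1))$.

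**The key steps.**

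First I would fix any vertex $v\in D$ and exploit that $\delta(G)\ge k-1$: $v$ has at least $k-1$ neighbours in $G$. If I could route those neighbours (or enough substitutes) into $D'$ "for free", I'd be done, but of course they cost. The real gain should come from a counting/averaging argument on $R$: each vertex of $R$ was added because it is needed by some vertex of $D$; I would set up a bipartite "need" structure between $D$ and $R$ and argue that because of minimality of $D'$ every $r\in R$ is the private helper of at least one $x\in D$ in the sense that removing $r$ would drop some $x$ below degree $k-1$. Then I'd bound $|R|\le\sum_{x\in D}(k-1-\deg_D(x))^+$ crudely, and sharpen it. The extra $-(k-1)^2$ strongly suggests the following cleaner route: pick one vertex $v\in D$, look at $N[v]$, and observe that we can choose $D'$ so that the "deficiency" of $v$ (how many more in-set neighbours it needs) is met by vertices that are themselves in $N(v)$, and then those $k-1$ vertices in $N(v)\cap D'$ each already have $v$ as an in-set neighbour, cutting their own cost by one — a saving of $(k-1)$ on each of up to $k-1$ helpers, i.e.\ $(k-1)^2$ total. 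Concretely: build $D'$ greedily but process $v$'s neighbourhood first, adding $k-1-\deg_D(v)$ vertices of $N(v)\setminus D$ to $D'$; each such added vertex now needs only $k-2$ further in-set neighbours rather than $k-1$; iterating the accounting gives the improvement.

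**Doing the accounting carefully.**

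The cleanest version I'd aim to write: let $D$ be a $\gamma_k(G)$-set and fix $v\in D$. Since $\deg_G(v)\ge k-1$, choose a set $A\subseteq N(v)$ with $|A|=k-1$; put $D_1=D\cup A$. Now every $x\in D$ needs $\deg_{D_1}$ (eventually $\deg_{D'}$) $\ge k-1$; build $D'$ from $D_1$ by adding, for each deficient $x$, enough new vertices. The point is that $v$ itself is now non-deficient (it has the $k-1$ vertices of $A$), and each vertex of $A$ already has $v$ in the set, so it needs at most $(k-2)$ more. So the cost is $|A| + \sum_{x\in A}(k-2) + \sum_{x\in D\setminus(A\cup\{v\})}(k-1) \le (k-1) + (k-1)(k-2) + (k-1)(|D|-1-(k-1))$ — and this telescopes to $k|D| - (k-1)^2$ after combining with $|D|=\gamma_k(G)$; I would double-check the arithmetic and, crucially, check that $A$ can be taken disjoint from $D$ or, if not, handle overlaps $A\cap D$ (which only helps, since those vertices are already in the set at no cost). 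One must also verify $D'$ is genuinely a $k$-tuple dominating set — it $k$-dominates because $D\subseteq D'$ and $D$ is $k$-dominating, and every vertex of $D'$ has in-set degree $\ge k-1$ by construction — and that the bound is vacuous/trivial when $\gamma_k(G)\le k-1$, which forces $G$ small and can be checked directly or absorbed.

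**Main obstacle.**

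The main obstacle is the overlap bookkeeping: the set $A\subseteq N(v)$ may intersect $D$, the deficient vertices' new neighbours may coincide with $A$ or with each other, and a single added vertex may simultaneously reduce the deficiency of several vertices of $D$. All of these overlaps only decrease $|D'|$, so they are "in our favour", but one has to phrase the greedy construction so that the stated per-vertex cost bounds ($k-1$, or $k-2$ for vertices of $A$) remain valid upper bounds regardless of overlaps, and so that no vertex is double-counted in the final sum. I expect the proof to dispose of this by the standard device of defining $D'$ as a \emph{minimum} extension and then bounding $|D'\setminus D|$ by the cost of one explicit (not necessarily optimal) extension, namely the greedy one just described; minimality of $D'$ does the rest.
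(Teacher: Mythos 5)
There is a genuine gap in the accounting: your construction, as written, does not produce the $-(k-1)^2$ term. The problem is where you locate the saving. You add $A\subseteq N(v)$ with $|A|=k-1$ and claim the helpers in $A$ become cheaper because each already has $v$ as an in-set neighbour. But a helper $a\in A\setminus D$ needs no further in-set neighbours at all: since $D$ is a $k$-dominating set and $a\notin D$, we have $\deg_D(a)\ge k\ge k-1$, so $a$ costs nothing to ``fix'' even in the naive construction, and there is no saving of the kind you describe. Concretely, your cost bound $(k-1)+(k-1)(k-2)+(k-1)\bigl(|D|-1-(k-1)\bigr)$ simplifies to $(k-1)(|D|-1)$, which gives $|D'|\le k|D|-(k-1)$, not $k|D|-(k-1)^2$; it does not ``telescope'' to the claimed bound. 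The formula is also internally inconsistent: charging $|A|$ treats $A$ as disjoint from $D$, the term $\sum_{x\in A}(k-2)$ treats the vertices of $A$ as needing repair (which only makes sense if they lie in $D$), and the count $|D|-1-(k-1)$ in the last sum treats $A$ as a subset of $D$. Under no consistent reading does the arithmetic reach $(k-1)^2$.

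The idea you are missing --- and the one the paper uses --- is that the saving comes from the helpers helping \emph{other} vertices of $D$, not from the helpers' own reduced requirements. Take $U=\{u_1,\dots,u_{k-1}\}\subseteq V(G)\setminus D$ arbitrary (there is no need for $U$ to sit inside one neighbourhood). Since $D$ is $k$-dominating, each $u_i$ has at least $k$ neighbours in $D$, so the $k-1$ added vertices inject at least $k(k-1)$ edge-endpoints into $D$ and reduce the total deficiency $\sum_{x\in D}\max\{0,\,k-1-\deg(x)\}$ by that amount in the relevant bound; the paper's split of $D$ into the still-deficient vertices $D_0$ and the rest is exactly the bookkeeping needed so that over-satisfied vertices do not spoil this count. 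The net gain is then $k(k-1)-(k-1)=(k-1)^2$. Your single-vertex focus on $v$ only exploits the $k-1$ edges from $A$ to $v$, a deficiency reduction of $k-1$ that is exactly cancelled by the cost $|A|=k-1$ of adding $A$, so it recovers only the trivial bound $k\gamma_k(G)$. The remaining elements of your plan (defining the extension by minimality and bounding it by one explicit greedy completion, verifying that the result is a $k$-tuple dominating set, and disposing of the degenerate case where $V(G)\setminus D$ has fewer than $k-1$ vertices) are sound and match the paper.
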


\begin{proof}
Let $D$ be a $\gamma_k(G)$-set. As $\gamma_{\times k}(G)\leq |V(G)|$ we assume, without loss of generality, that $k|D|-(k-1)^2\leq |V(G)|$. 
Now, let $U=\{u_1, \ldots, u_{k-1}\}\subseteq V(G)\setminus D$,  $D'=D\cup U$ and $D_0=\{v\in D :  \deg_{D'}(v)<k-1\}$.
The following inequalities arise from counting arguments on the number of edges joining $U$ with $D_0$ and $U$ with $D\setminus D_0$, respectively.
$$\sum_{v\in D_0} \deg_{D'}(v)\geq \sum_{i=1}^{k-1} \deg_{D_0}(u_i) \hspace{.3cm} \text{ and } \hspace{.3cm} |D\setminus D_0|(k-1)\geq \sum_{i=1}^{k-1} \deg_{D\setminus D_0}(u_i).$$ 
By the previous inequalities  and the fact that $D$ is a $k$-dominating set of $G$, we deduce that
\begin{align*}
\sum_{v\in D_0} \deg_{D'}(v)+|D\setminus D_0|(k-1)&\geq \sum_{i=1}^{k-1} \deg_{D_0}(u_i)+ \sum_{i=1}^{k-1} \deg_{D\setminus D_0}(u_i) \\ 
             &= \sum_{i=1}^{k-1} \deg_{D}(u_i)\\
&\geq k(k-1).
\end{align*}
Now, we define $D''\subseteq V(G)$ as a set of minimum cardinality among all supersets $W$ of $D'$ such that $\deg_W(x)\geq k-1$ for every vertex $x\in D$.
Since $\deg_{D'}(x)\geq k-1$ for
every $x\in D\setminus D_0$, the condition on $W$ is equivalent to that every vertex $v\in D_0$ has at least $k-1-\deg_{D'}(v)$ neighbours in $W\setminus D$. Hence, by the minimality of $D''$ and the inequality chain above, we deduce that 
\begin{align*}
|D''\setminus D'|&\leq |D_0|(k-1)-\sum_{v\in D_0} \deg_{D'}(v)\\
&=|D|(k-1)-\left(\sum_{v\in D_0} \deg_{D'}(v)+|D\setminus D_0|(k-1)\right)\\
&\leq |D|(k-1)-k(k-1).
\end{align*}  
Moreover, it is easy to check that $D''$ is a $k$-tuple dominating set of $G$ because each vertex in $V(G)\setminus D$ is dominated $k$ times by vertices of $D\subseteq D''$ (recall that $D$ is a $k$-dominating set of $G$) and  the construction of $D''$ ensures that each vertex in $D$ is dominated $k$ times by vertices of $D''$. Hence, 
\begin{align*}
\gamma_{\times k}(G)\leq |D''|& =|D'|+|D''\setminus D'|\\
                   &\leq |D|+ k-1 +|D|(k-1)-k(k-1)\\
                   &=k\gamma_{k}(G)-(k-1)^2,
\end{align*}
which completes the proof.
\end{proof}

The bound above is tight. For instance, it is achieved by any complete bipartite graph $K_{k,k'}$ with $k'\geq k$, as $\gamma_{\times k}(K_{k,k'})=2k-1$ and $\gamma_{k}(K_{k,k'})=k$.
When $k=2$, Theorem~\ref{teo-open-problem} leads to the  relationship $\gamma_{\times 2}(G)\leq 2\gamma_2(G)-1$ given in 2018 by Bonomo et al. \cite{BBGMS2018}.

\vspace{.15cm}
A set $D\subseteq V(G)$ is a $2$-packing of a graph $G$ if $N[u]\cap N[v]=\emptyset$ for every pair of different vertices $u,v\in D$. The $2$-packing number of $G$, denoted by $\rho(G)$,  is the maximum cardinality among all $2$-packings of $G$.

The next theorem relates the $k$-tuple domination number with the $2$-packing number of a graph. Note that the bounds given in this result are generalizations of the bounds $\gamma_{\times 2}(G)\geq 2\rho(G)$ due to Chellali et al. \cite{MR2232995}, and  $\gamma_{\times 2}(G)\leq |V(G)|-\rho(G)$ due to Chellali and Haynes \cite{MR2137931}.

\begin{theorem}\label{teo-xk-packing}
Let $k\geq 2$ be an integer. For any graph $G$ of order $n$   and $\delta(G)\geq k$,
$$k\rho(G)\leq \gamma_{\times k}(G)\leq n-\rho(G) .$$
\end{theorem}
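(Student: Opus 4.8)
The plan is to establish the two inequalities separately; both turn out to rest on elementary counting once the structure of a $2$-packing is unpacked. Fix once and for all a $\rho(G)$-set $P$. By definition the closed neighbourhoods $\{N[v]:v\in P\}$ are pairwise disjoint, and this single fact has two consequences I will use repeatedly: no two vertices of $P$ are adjacent (an edge between $u,v\in P$ would put $v\in N[u]\cap N[v]$), and no vertex of $G$ has two neighbours in $P$ (a common neighbour $w$ of $u,v\in P$ would lie in $N[u]\cap N[v]$).

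For the lower bound $k\rho(G)\le\gamma_{\times k}(G)$, I would take an arbitrary $\gamma_{\times k}(G)$-set $D$ and first record the uniform statement $|N[v]\cap D|\ge k$ for every $v\in V(G)$: if $v\in D$ this reads $1+\deg_D(v)\ge 1+(k-1)$, and if $v\notin D$ it reads $\deg_D(v)\ge k$, so in both cases it follows from $D$ being a $k$-tuple dominating set. Restricting attention to $v\in P$ and using that the sets $N[v]$ with $v\in P$ are disjoint subsets of $V(G)$, each meeting $D$ in at least $k$ vertices, summation gives $|D|\ge\sum_{v\in P}|N[v]\cap D|\ge k|P|=k\rho(G)$. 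Since $D$ was an arbitrary minimum $k$-tuple dominating set, this is the claimed bound.

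For the upper bound the natural candidate is $D=V(G)\setminus P$, which has cardinality $n-\rho(G)$, and it only remains to verify that it is a $k$-tuple dominating set. For $v\in P=V(G)\setminus D$, the $2$-packing property gives $N(v)\cap P=\emptyset$, hence $\deg_D(v)=\deg_G(v)\ge\delta(G)\ge k$; for $w\in V(G)\setminus P=D$, the $2$-packing property gives $|N(w)\cap P|\le 1$, hence $\deg_D(w)\ge\delta(G)-1\ge k-1$. The first condition shows $D$ is $k$-dominating and the second shows every vertex of $D$ has at least $k-1$ neighbours inside $D$, so $D$ is indeed a $k$-tuple dominating set and $\gamma_{\times k}(G)\le|D|=n-\rho(G)$.

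I do not expect a genuine obstacle here; the only two points needing care are the translation of the two-case definition of $k$-tuple domination into the single inequality $|N[v]\cap D|\ge k$ used in the lower bound, and the observation that the hypothesis $\delta(G)\ge k$ (rather than merely $\delta(G)\ge k-1$) is exactly what makes the complement of a $2$-packing survive as a $k$-tuple dominating set: deleting $P$ costs a former packing vertex nothing and costs any other vertex at most one neighbour. If desired, one could additionally note that equality in the lower bound forces $D\subseteq\bigcup_{v\in P}N[v]$ with each $N[v]$ contributing exactly $k$ vertices, but this refinement is not required for the statement as given.
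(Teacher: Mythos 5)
Your proposal is correct and follows essentially the same route as the paper: the lower bound comes from summing $|N[v]\cap D|\ge k$ over the pairwise disjoint closed neighbourhoods of the packing vertices (the paper writes this as $\sum_{v\in P\setminus D}\deg_D(v)+\sum_{v\in P\cap D}(\deg_D(v)+1)$, which is the same quantity), and the upper bound is obtained by verifying that the complement of a maximum $2$-packing is a $k$-tuple dominating set using exactly the two observations that a packing vertex loses no neighbours and any other vertex loses at most one. No gaps.
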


\begin{proof}
Let $D$ be a $\rho(G)$-set and $S$  a $\gamma_{\times k}(G)$-set. Since $\deg_S(v)\geq k$ for every $v\in D\setminus S$, and $\deg_S(v)\geq k-1$ for every $v\in D\cap S$, we deduce that
$$\gamma_{\times k}(G)=|S|\geq \sum_{v\in D\setminus S}\deg_S(v)+\sum_{v\in D\cap S}(\deg_S(v)+1)\geq k|D|=k\rho(G),$$
and the lower bound follows.

Next, let us proceed to prove that $V(G)\setminus D$ is a $k$-tuple dominating set of $G$. Since $\delta(G)\geq k$, $N(D)\cap D=\emptyset$ and $\deg_{D}(x)\leq 1$ for every $x\in V(G)\setminus D$, we deduce that $\deg_{V(G)\setminus D}(v)\geq k$ for every $v\in D$ and $\deg_{V(G)\setminus D}(v)\geq k-1$ for every  $v\in V(G)\setminus D$. Hence, $V(G)\setminus D$ is a $k$-tuple dominating set of $G$, as desired.

Therefore,  $\gamma_{\times k}(G)\leq |V(G)\setminus D|=n-\rho(G)$, which completes the proof.
\end{proof}

Let $\mathcal{H}$ be the family of graphs $H_{k,r}$ defined as follows. For any pair of integers $k,r\in \mathbb{Z}$, with $k\ge 2$ and $r\ge 1$, the graph $H_{k,r}$ is obtained from a complete graph $K_{kr}$ and an empty graph $rK_1$ such that   $V(H_{k,r})=V(K_{kr})\cup V(rK_1)$,  $V(K_{kr})=\{v_1,\dots, v_{kr}\}$ and $V(rK_1)=\{u_1,\dots, u_r\}$ and $E(H_{k,r})=E(K_{kr})\cup (\bigcup_{i=0}^{r-1}\{u_{i+1}v_{ki+1},\dots, {u_{i+1}}v_{ki+k}\}).$  Figure~\ref{figure-packing} shows a graph of this family.	
Observe that $|V(H_{k,r})|=r(k+1)$, $\gamma_{\times k}(H_{k,r})=kr$ and $\rho(H_{k,r})=r$  for every $H_{k,r}\in \mathcal{H}$. Therefore, for these graphs the bounds given in Theorem \ref{teo-xk-packing} are tight, i.e.,  $\gamma_{\times k}(H_{k,r})=k\rho(H_{k,r})=|V(H_{k,r})|-\rho(H_{k,r})$.
  
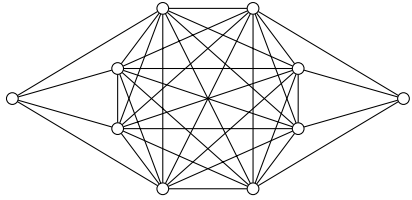
\begin{figure}[ht]
\centering
\begin{tikzpicture}[scale=.4, transform shape]
\node [draw, shape=circle] (v1) at  (0,6) {};
\node [draw, shape=circle] (v2) at  (-1.5,4) {};
\node [draw, shape=circle] (v3) at  (-1.5,2) {};
\node [draw, shape=circle] (v4) at  (0,0) {};

\node [draw, shape=circle] (v5) at  (3,0) {};
\node [draw, shape=circle] (v6) at  (4.5,2) {};
\node [draw, shape=circle] (v7) at  (4.5,4) {};
\node [draw, shape=circle] (v8) at  (3,6) {};

\node [draw, shape=circle] (u1) at  (-5,3) {};
\node [draw, shape=circle] (u2) at  (8,3) {};

\draw (v2)--(v1)--(u1)--(v2)--(v3)--(u1)--(v4)--(v3);
\draw (v6)--(v5)--(u2)--(v6)--(v7)--(u2)--(v8)--(v7);
\draw (v4)--(v5)--(v7)--(v1)--(v3)--(v5)--(v8)--(v1);
\draw (v1)--(v4)--(v6)--(v8)--(v2)--(v5)--(v1)--(v6);
\draw (v2)--(v4)--(v7)--(v2)--(v6)--(v3)--(v7);
\draw (v3)--(v8)--(v4);
\end{tikzpicture}
\caption{The graph $H_{4,2}\in \mathcal{H}$.}\label{figure-packing}
\end{figure}  



In \cite{Harary2000}, Harary and Haynes showed that $\gamma_{\times k}(G)\geq \frac{2kn-2m}{k+1}$ for any graph $G$ of order $n$ and size $m$ with $\delta(G)\geq k-1$.
The next result is a partial refinement of the bound above because it only considers graphs with minimum degree at least $k$.

\begin{proposition}\label{teo-final}
Let $k\geq 2$ be an integer. For any graph $G$ of order $n$ and size $m$ with $\delta(G)\geq k$,   
$$\gamma_{\times k}(G)\geq \frac{(\delta(G)+k)n-2m}{\delta(G)+1}.$$
\end{proposition}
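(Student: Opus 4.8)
The strategy is to fix a $\gamma_{\times k}(G)$-set $S$, put $\overline{S}=V(G)\setminus S$ and $s=|S|$, and derive a lower bound for $2m$ that is linear and decreasing in $s$; solving that bound for $s$ will then give the stated inequality. Throughout, let $e(S)$ and $e(S,\overline{S})$ denote the numbers of edges with both ends in $S$, respectively with exactly one end in $S$ and one in $\overline{S}$, and recall that $\deg_{V(G)}(v)$ is the degree of $v$ in $G$.

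First I would record three facts. \textup{(i)} As $S$ is a $k$-tuple dominating set, $\deg_S(v)\ge k-1$ for every $v\in S$, whence $2e(S)=\sum_{v\in S}\deg_S(v)\ge(k-1)s$. \textup{(ii)} As $S$ is in particular a $k$-dominating set, $\deg_S(v)\ge k$ for every $v\in\overline{S}$, whence $e(S,\overline{S})=\sum_{v\in\overline{S}}\deg_S(v)\ge k(n-s)$. \textup{(iii)} Every vertex of $\overline{S}$ has degree at least $\delta(G)$ in $G$, so $\sum_{v\in\overline{S}}\deg_{V(G)}(v)\ge\delta(G)(n-s)$. Then I would split the handshake identity as $2m=\sum_{v\in S}\deg_{V(G)}(v)+\sum_{v\in\overline{S}}\deg_{V(G)}(v)=\bigl(2e(S)+e(S,\overline{S})\bigr)+\sum_{v\in\overline{S}}\deg_{V(G)}(v)$, bound the first bracket below by $(k-1)s+k(n-s)$ using \textup{(i)} and \textup{(ii)}, bound the last sum below by $\delta(G)(n-s)$ using \textup{(iii)}, and simplify the resulting linear expression: $2m\ge(k-1)s+k(n-s)+\delta(G)(n-s)=(\delta(G)+k)n-(\delta(G)+1)s$. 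Rearranging and recalling that $s=\gamma_{\times k}(G)$ yields $\gamma_{\times k}(G)\ge\frac{(\delta(G)+k)n-2m}{\delta(G)+1}$.

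This is a bookkeeping argument rather than a deep one, and the one point that needs care is the routing of the two pieces of information available about a vertex $v\in\overline{S}$ — that it has at least $k$ neighbours in $S$, and that it has degree at least $\delta(G)$. Using both directly on the same term of $\sum_{v\in V(G)}\deg_{V(G)}(v)$ is lossy; instead one should let the ``$k$ neighbours in $S$'' count enter through $e(S,\overline{S})$ inside $\sum_{v\in S}\deg_{V(G)}(v)$, and let the ``degree at least $\delta(G)$'' count enter through $\sum_{v\in\overline{S}}\deg_{V(G)}(v)$, so that each $S$–$\overline{S}$ edge is charged exactly once in each of the two partial sums (as the handshake identity demands) and each of \textup{(i)}–\textup{(iii)} is used exactly once. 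Finally, it is worth remarking why the hypothesis $\delta(G)\ge k$ makes this bound at least as strong as $\gamma_{\times k}(G)\ge\frac{2kn-2m}{k+1}$: the difference of the two fractions, after clearing denominators, equals a positive multiple of $(k-\delta(G))\bigl((k-1)n-2m\bigr)$, and both factors are $\le 0$ because $2m/n\ge\delta(G)\ge k>k-1$, with equality exactly when $\delta(G)=k$.
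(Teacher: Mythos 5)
Your proposal is correct and follows essentially the same argument as the paper: both decompose $2m$ so that each $S$--$\overline{S}$ edge contributes once via $\deg_S$ on $\overline{S}$ and once via $\deg_{V(G)}$ on $\overline{S}$, then apply the bounds $\deg_S(v)\ge k-1$ on $S$, $\deg_S(v)\ge k$ on $\overline{S}$, and $\deg_{V(G)}(v)\ge\delta(G)$, arriving at $2m\ge(\delta(G)+k)n-(\delta(G)+1)|S|$. Your closing comparison with the Harary--Haynes bound is a correct additional observation not present in the paper's proof.
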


\begin{proof}
Let $S$ be a $\gamma_{\times k}(G)$-set and $\overline{S}=V(G)\setminus S$. Hence,
\begin{align*}
2m &=\sum_{v\in S}\deg_{S}(v)+2\sum_{v\in \overline{S}}\deg_{S}(v)+\sum_{v\in \overline{S}}\deg_{\overline{S}}(v)\\
&=\sum_{v\in S}\deg_{S}(v)+\sum_{v\in \overline{S}}\deg_{S}(v)+\sum_{v\in \overline{S}}\deg_{V(G)}(v)\\
&\geq(k-1)|S|+k(n-|S|)+\delta(G)(n-|S|)\\
&=(k-1)|S|+(\delta(G)+k)(n-|S|)\\
&=(\delta(G)+k)n-(\delta(G)+1)|S|,
\end{align*}
which implies that $|S|\geq \frac{(\delta(G)+k)n-2m}{\delta(G)+1}.$ Therefore, the proof is complete.
\end{proof}

The bound above is tight. For instance, it is achieved for the join graph $G=K_k+C_k$ obtained from the complete graph $K_k$ and the cycle graph $C_k$, with $k\geq 3$. For this case, we have that $\gamma_{\times k}(G)=k$, $|V(G)|=2k$, $\delta(G)=k+2$ and $2|E(G)|=3k^2+k$. Also, it is achieved for the complete graph $K_n$ ($n\geq 3$) and any $k\in \{2, \ldots, n-1\}$.

\end{document}